\theoremstyle{plain}
\newtheorem{theorem}{Theorem}
\numberwithin{equation}{section}
\begin{document}

\title {Beyond Quartic Hamiltonians}

\date{}

\author[P.L. Robinson]{P.L. Robinson}

\address{Department of Mathematics \\ University of Florida \\ Gainesville FL 32611  USA }

\email[]{paulr@ufl.edu}

\subjclass{} \keywords{}

\begin{abstract}

We investigate the Hamiltonian system generated by a homogeneous binary polynomial $U$ of order greater than four.  In particular, we study the circumstances under which the associated Hessian is a Weierstrass $\wp$ function and find that the vanishing of two covariants of $U$ is involved. 

\end{abstract}

\maketitle

\medbreak

\section*{Introduction} 

In [5] we showed that along the Hamilton curves generated by a homogeneous cubic or quartic polynomial in the plane, a suitable rescaling of the associated Hessian satisfies the first-order differential equation 
$$\overset{\circ}{F} \,^2 = 4 F^3 - g_2 F - g_3$$
characteristic of a Weierstrass $\wp$ function, where $g_2$ and $g_3$ are constants of the motion. In the present paper we consider as Hamiltonian a homogeneous binary polynomial of higher order and find that the situation is a little more complicated. Along each Hamilton curve, the associated Hessian again satisfies a differential equation of the same form, but now $g_2$ and $g_3$ need not be constants of the motion: their derivatives are given by constant multiples of two covariants of the homogeneous polynomial; a syzygy relating these covariants to the Hamiltonian and its Hessian then shows that (unless the Hessian itself is constant) constancy of $g_2$ and $g_3$ forces $g_2$ to vanish, as in the case of a homogeneous cubic. 

\medbreak 

\section*{Hamiltonians and Hessians}

\medbreak 

We begin by fixing our conventions with regard to Hamiltonians. As a setting for our study, we take the symplectic plane with linear symplectic coordinates $(p, q)$. Given two functions $W$ and $Z$ in the plane, their classical Poisson bracket $\{ W, Z \}$ is their Jacobian $(W, Z)$ with sign reversed: thus 
$$\{ W, Z \} := \frac{\partial W}{\partial q} \frac{\partial Z}{\partial p} -  \frac{\partial W}{\partial p} \frac{\partial Z}{\partial q}$$
coincides with 
\[(Z, W) := 
\begin{vmatrix}
 \frac{\partial Z}{\partial p} & \frac{\partial Z}{\partial q} \\
 \frac{\partial W}{\partial p} & \frac{\partial W}{\partial q}
\end{vmatrix}. 
\]
When the function $U$ is taken as Hamiltonian, the classical Hamilton equations of motion read 
$$\overset{\circ}{q} = \frac{\partial U}{\partial p} \; , \; \; - \overset{\circ}{p} = \frac{\partial U}{\partial q};$$
by a {\it Hamilton curve} we shall mean a solution curve to this system. The derivative of any function $V$ along such a Hamilton curve $\gamma$ is given by 
$$(V \circ \gamma)^{\circ} = \{ V, U \} \circ \gamma = (U, V) \circ \gamma.$$
We shall often write more simply 
$$\overset{\circ}{V} = \{ V, U \} = (U, V) =  U_p V_q - U_q V_p$$
where the derivative $^{\circ}$ is taken along a Hamilton curve, which has been suppressed from the notation. 

\medbreak 

Throughout, our planar Hamiltonian will be a homogeneous polynomial: a (binary) {\it quantic} in classical terminology. The general theory of covariants and invariants for quantics was developed largely by Cayley and Sylvester in the nineteenth century; in particular, Cayley produced a sequence of ten memoirs on the topic between 1854 and 1878. Salmon [6] offers an almost contemporaneous account of this classical theory; Elliott [2] is a slightly more recent elaboration, still in the classical sprit, as is Grace and Young [3]. As we shall encounter several covariants of our quantic, we prepare the analysis by fixing our conventions regarding them.  

\medbreak 

Thus, let $U$ be the (binary) quantic of order $N$ given by 
$$U = a_0 p^N + N a_1 p^{N - 1} q + \frac{1}{2} N (N - 1) a_2 p^{N - 2} q^2 + \cdots + a_N q^N$$
where the inclusion of binomial coefficients is both traditional and simplifying. In short, 
$$U = \sum_{n = 0}^{N}  \binom{N}{n} \: a_n \: p^{N - n} q^n$$
or
$$U = (a_0, \dots , a_N) (p, q)^N$$
in notation introduced by Cayley. Although much of what follows applies quite generally, we shall suppose that $N > 4$ unless otherwise stated. 

\medbreak 

Perhaps the most important covariant associated to $U$ is its Hessian: the determinant of its matrix of second partial derivatives, thus 
\[
\begin{vmatrix}
 U_{pp} & U_{pq} \\
 U_{qp} & U_{qq}
\end{vmatrix} .
\]
Each differentiation brings down the order; we clear factors to normalize the Hessian and write $H$ for its normalized form: thus 
\[
\begin{vmatrix}
 U_{pp} & U_{pq} \\
 U_{qp} & U_{qq}
\end{vmatrix} = N^2 (N - 1)^2 H
\]
where 
$$H = (a_0 a_2 - a_1^2) p^{2 N - 4} + (N - 2) (a_0 a_3 - a_1 a_2) p^{2 N - 5} q + \dots \; .$$

\medbreak 

The Jacobian $(U, H)$ of $U$ and $H$ is a second important covariant of $U$. Differentiation again introduces factors, which we remove to define the normalized covariant $G$: thus 
\[
\begin{vmatrix}
 U_p & U_q \\
 H_p & H_q
\end{vmatrix} = N (N - 2) G
\]
where 
$$G = (a_0^2 a_3 - 3 a_0 a_1 a_2 + 2 a_1^3) p^{3 N - 6} + \dots \; .$$

\medbreak 

Two further covariants associated to $U$ arise from its quartic emanant; briefly, the details are as follows. Introduce auxiliary variables $(P, Q)$: the fourth emanant $(P \partial_p + Q \partial_q)^4 U$ of $U$ is a quartic polynomial 
$$A P^4 + 4 B P^3 Q + 6 C P^2 Q^2 + 4 D P Q^3 + E Q^4$$
in $(P, Q)$ with coefficients that are polynomials in $(p, q)$. This quartic has familiar  invariants 
$$AE - 4 BD + 3 C^2$$
and 
$$ACE + 2 BCD - AD^2 - B^2 E - C^3$$
which are then covariants of $U$. Again we normalize, defining the covariants $S$ and $T$ of $U$ by 
$$AE - 4 BD + 3 C^2 = [N(N - 1)(N - 2)(N - 3)]^2 S$$
and 
$$ACE + 2 BCD - AD^2 - B^2 E - C^3 = [N(N - 1)(N - 2)(N - 3)]^3 T$$
where 
$$S = (a_0 a_4 - 4 a_1 a_3 + 3 a_2^2) p^{2 N - 8} + \dots$$
and 
$$T = (a_0 a_2 a_4 + 2 a_1 a_2 a_3 - a_0 a_3^2 - a_1^2 a_4 - a_2^3) p^{3 N - 12} + \dots \; .$$

\medbreak 

We remark that each covariant of $U$ may be recovered from its source or leader, according to a theorem of Roberts [4]: thus, $H$ may be recovered from its leading coefficient $a_0 a_2 - a_1^2$ while $G$ may be recovered from its leading coefficient $a_0^2 a_3 - 3 a_0 a_1 a_2 + 2 a_1^3$; likewise for $S$ and $T$. Discussions of this point and its consequences may be found in [2] and [6]. 

\medbreak 

These (normalized) covariants $H, G, S$ and $T$ are related to $U$ by the following syzygy. 

\medbreak 

\begin{theorem} \label{syz1}
$G^2 + 4 H^3 + U^3 T = U^2 S H.$
\end{theorem}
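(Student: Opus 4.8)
The plan is to reduce the proposed syzygy to a single polynomial identity among the leading coefficients of the covariants, and then to invoke the theorem of Roberts quoted above. The first point to record is that both sides are genuine covariants of $U$: the covariants of a binary quantic form a ring that is graded by order in $(p,q)$ and by degree in the coefficients $a_0, \dots, a_N$, so each product $G^2$, $4H^3$, $U^3 T$ and $U^2 S H$ is a covariant, and a sum of covariants of a common order is again a covariant. A short bookkeeping check confirms that all four products share the order $6N - 12$, the coefficient-degree $6$, and hence (these being determined by order and degree for a binary form) the weight $6$; thus $G^2 + 4H^3 + U^3 T - U^2 S H$ is a covariant of $U$ of this single type.

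By Roberts' theorem a covariant is recovered from its source, so a covariant whose source vanishes is identically zero. It therefore suffices to check that the source of $G^2 + 4 H^3 + U^3 T$ coincides with the source of $U^2 S H$. Since the source of a product is the product of the sources --- the leaders being the coefficients of the top power of $p$, which multiply without any stray numerical factor under the normalizations already fixed --- I may read the sources straight off the leaders recorded earlier. Writing
$$g = a_0^2 a_3 - 3 a_0 a_1 a_2 + 2 a_1^3, \quad h = a_0 a_2 - a_1^2,$$
$$s = a_0 a_4 - 4 a_1 a_3 + 3 a_2^2, \quad t = a_0 a_2 a_4 + 2 a_1 a_2 a_3 - a_0 a_3^2 - a_1^2 a_4 - a_2^3,$$
and recalling that the leader of $U$ is $a_0$, the syzygy at the level of sources becomes
$$g^2 + 4 h^3 + a_0^3 t = a_0^2 s h.$$

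The remaining step is to verify this identity by direct expansion; both sides are isobaric of weight $6$ and degree $6$ in $a_0, \dots, a_4$, which sharply limits the admissible monomials and provides a running check. On expanding the left-hand side the terms $a_0^4 a_3^2$, $a_1^6$ and $a_0 a_1^4 a_2$ each cancel among $g^2$, $4 h^3$ and $a_0^3 t$, and collecting the survivors gives $a_0^4 a_2 a_4 - a_0^3 a_1^2 a_4 - 4 a_0^3 a_1 a_2 a_3 + 4 a_0^2 a_1^3 a_3 + 3 a_0^3 a_2^3 - 3 a_0^2 a_1^2 a_2^2$, which is exactly the expansion of $a_0^2 s h$. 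The conceptual weight of the argument lies in the first two paragraphs --- the recognition that Roberts' theorem converts the covariant identity into a finite identity in five variables; the only genuine computation, the expansion just sketched, is routine and self-checking, so I do not expect a serious obstacle beyond careful bookkeeping of the cancellations and the verification that the normalizing constants conspire to leave the clean source identity above.
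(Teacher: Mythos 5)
Your proof is correct and follows essentially the same route as the paper's own argument: invoke Roberts' theorem to reduce the covariant identity to an identity among sources, then verify that identity by direct expansion (which the paper dismisses as ``entirely straightforward'' and you carry out explicitly and correctly). Your source identity $g^2 + 4h^3 + a_0^3 t = a_0^2 s h$ and its expansion check out exactly.
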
 

\begin{proof} 
Classical: the aforementioned theorem of Roberts [4] implies that we need only check correct behaviour of sources; this check is entirely straightforward. 
\end{proof} 

\medbreak 

The differential equation characteristic of Weierstrass $\wp$ functions is now on the verge of manifestation. To be explicit, by a {\it Weierstrass equation} we shall mean a first-order ordinary differential equation of the form 
\begin{equation} \label{DE}
\overset{\circ}{\Phi}\,^2 = 4 \Phi^3 - g_2 \Phi - g_3 \tag{{\bf DE}}
\end{equation}
wherein $g_2$ and $g_3$ are functions; we shall call this Weierstrass equation {\it proper} when $g_2$ and $g_3$ are constant. In the proper case, introduce the discriminant $\Delta = g_2^3 - 27 g_3^2$. When $\Delta$ is zero, \ref{DE} has elementary solutions, the cubic on its right side having a repeated root. When $\Delta$ is nonzero, the nonconstant solutions of \ref{DE} are Weierstrass $\wp$ functions. 
\medbreak 

Now, a Weierstrass equation emerges from a simple rescaling of the Hessian. 

\medbreak 

\begin{theorem} \label{Phi}
The scalar multiple 
$$\Phi = -  [N(N - 2)]^2 H$$ 
of the Hessian satisfies the first-order differential equation 
$$\overset{\circ}{\Phi} \, ^2 = 4 \Phi^3 - N^4 (N - 2)^4 U^2 S \Phi - N^6 (N - 2)^6 U^3 T$$
along each Hamilton curve of $U$. 
\end{theorem}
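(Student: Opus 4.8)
The plan is to reduce the claimed Weierstrass equation to the syzygy of Theorem \ref{syz1} by a short chain of substitutions; all the genuine algebra has already been deposited in that syzygy, so the present argument is essentially an exercise in matching normalization constants. To lighten the notation I would write $M = N(N - 2)$ throughout.

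First I would identify the derivative of the Hessian along a Hamilton curve. For any function $V$ we have $\overset{\circ}{V} = (U, V) = U_p V_q - U_q V_p$, so in particular $\overset{\circ}{H} = (U, H) = U_p H_q - U_q H_p$, which is precisely the determinant appearing in the definition of the covariant $G$. Hence $\overset{\circ}{H} = M G$ outright. Differentiating $\Phi = - M^2 H$ along the Hamilton curve then gives $\overset{\circ}{\Phi} = - M^2 \overset{\circ}{H} = - M^3 G$, and squaring yields $\overset{\circ}{\Phi}^2 = M^6 G^2$.

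The second and final step is to eliminate $G$ in favour of $\Phi$ by means of the syzygy. Theorem \ref{syz1} rearranges to $G^2 = U^2 S H - 4 H^3 - U^3 T$, so that $\overset{\circ}{\Phi}^2 = M^6 (U^2 S H - 4 H^3 - U^3 T)$. Now substitute $H = - \Phi / M^2$: the cubic term contributes $- 4 M^6 (-\Phi/M^2)^3 = 4 \Phi^3$, the middle term contributes $M^6 U^2 S (-\Phi/M^2) = - M^4 U^2 S \Phi$, and the last term contributes $- M^6 U^3 T$. Assembling these and restoring $M = N(N-2)$ produces exactly
$$\overset{\circ}{\Phi}^2 = 4 \Phi^3 - N^4 (N - 2)^4 U^2 S \Phi - N^6 (N - 2)^6 U^3 T,$$
as required.

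I anticipate no serious obstacle: the only thing to watch is the bookkeeping of the powers of $M$, and in particular the sign produced by cubing $H = - \Phi / M^2$, which is what converts the term $-4 H^3$ into the positive leading term $+ 4 \Phi^3$ demanded by the Weierstrass form. All of the conceptual content — verifying that the combination $G^2 + 4 H^3 + U^3 T$ genuinely equals $U^2 S H$ — resides in Theorem \ref{syz1} and is taken as established, so the present proof is purely formal once $\overset{\circ}{H} = M G$ is recognized.
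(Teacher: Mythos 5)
Your proposal is correct and follows essentially the same route as the paper: differentiate $\Phi$ along the Hamilton curve, identify $\overset{\circ}{H} = (U,H) = N(N-2)\,G$ via the normalization of the covariant $G$, square to get $\overset{\circ}{\Phi}\,^2 = [N(N-2)]^6 G^2$, and then substitute from the syzygy of Theorem \ref{syz1}. The only difference is that you write out the bookkeeping of the powers of $N(N-2)$ and the sign from cubing $H = -\Phi/[N(N-2)]^2$ explicitly, where the paper compresses this into ``substitution and rearrangement.''
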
 
\begin{proof}
As usual, we suppress notation for the Hamilton curve, differentiation of $\Phi$ along which results in 
$$\overset{\circ}{\Phi} = -  [N(N - 2)]^2 \overset{\circ}{H} = -  [N(N - 2)]^2 (U, H) = -  [N(N - 2)]^3 G$$
whence we deduce that 
$$\overset{\circ}{\Phi}\,^2 = [N(N - 2)]^6 G^2.$$
Substitution from Theorem \ref{syz1} and rearrangement conclude the argument. 
\end{proof} 

\medbreak 

This differential equation has the Weierstrass form \ref{DE}: 
$$\overset{\circ}{\Phi}\,^2 = 4 \Phi^3 - g_2 \Phi - g_3$$
where 
$$g_2 = [N^2 (N - 2)^2 U]^2 S$$
and 
$$g_3 = [N^2 (N - 2)^2 U]^3 T.$$
Here, the covariants $\Phi$, $g_2$ and $g_3$ are evaluated along the Hamilton curve; in general, $g_2$ and $g_3$ need not be constant. 

\medbreak 

Our primary concern is with those situations in which this Weierstrass equation is proper: those situations in which $g_2$ and $g_3$ are {\it constant} along the Hamilton curve. Of course, $U$ itself is constant along each Hamilton curve: indeed, 
$$\overset{\circ}{U} = (U, U) = 0.$$ 
If this constant value of $U$ is zero, then $g_2 = g_3 = 0$ and the differential equation \ref{DE} simplifies, its nonzero solutions being inverse squares. Discounting this case in which $U$ is zero, we are interested in a Hamilton curve along which the covariants $S$ and $T$ are constant. 

\medbreak 

Naturally, we detect constancy along the Hamilton curve by differentiation. Entirely routine computations reveal that along each Hamilton curve, 
$$\overset{\circ}{S} = (U, S) = N (N - 4) [ S_0 p^{3 N - 10} + \cdots \; ]$$
and 
$$\overset{\circ}{T} = (U, T) = N (N - 4) [ T_0 p^{4 N - 14} + \cdots \; ]$$
where 
$$S_0 = a_0^2 a_5 - 5 a_0 a_1 a_4 + 2 a_0 a_2 a_3 + 8 a_1^2 a_3 - 6 a_1 a_2^2$$
and 
$$T_0 = a_0^2 a_2 a_5 - a_0^2 a_3 a_4 - a_0 a_1^2 a_5 - 2 a_0 a_1 a_2 a_4 + 4 a_0 a_1 a_3^2 - a_0 a_2^2 a_3 + 3 a_1^3 a_4 - 6 a_1^2 a_2 a_3 + 3 a_1 a_2^3.$$
Accordingly, we focus our attention on the Jacobian covariants  $(U, S)$ and $(U, T)$. 

\medbreak 

A useful alternative expression for $(U, T)$ is as follows. 

\medbreak 

\begin{theorem} \label{switch}
$2 (N - 2) (U, T) = N (H, S).$
\end{theorem}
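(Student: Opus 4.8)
The plan is to prove the covariant identity $2(N-2)(U,T) = N(H,S)$ by reducing it to a verification on leaders (sources), exactly as Theorem~\ref{syz1} was handled. Both sides are Jacobian-type covariants of the quantic $U$; by the theorem of Roberts, a covariant is determined by its source (its leading coefficient, the coefficient of the highest power of $p$). So it suffices to check that the leaders of the two sides agree, after which the full identity follows automatically.

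First I would compute the orders and leaders of each side. Since $U$ has order $N$ and $T$ has order $3N-12$, the Jacobian $(U,T)$ has order $(N-1)+(3N-13) = 4N-14$; the excerpt already records its leader as $N(N-4)T_0$, so the left side has leader $2(N-2)N(N-4)T_0$. On the right, $H$ has order $2N-4$ and $S$ has order $2N-8$, so $(H,S)$ again has order $4N-14$, matching. The task thus reduces to computing the leader of $(H,S)$ and checking the scalar identity of leaders
\[
2(N-2)\,N(N-4)\,T_0 = N \cdot \bigl[\text{leader of }(H,S)\bigr].
\]

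To extract the leader of $(H,S) = H_p S_q - H_q S_p$, I would use the standard fact that for a covariant $C$ of order $m$ with source $c_0$, the leader of the Jacobian $(C,C')$ is governed by the source together with the next coefficient, which the Roberts/Cayley formalism expresses through the Omega (annihilator) operators. Concretely, writing $H = h_0 p^{2N-4} + h_1 p^{2N-5}q + \cdots$ and $S = s_0 p^{2N-8} + s_1 p^{2N-9}q + \cdots$ with $h_0 = a_0a_2 - a_1^2$, $s_0 = a_0a_4 - 4a_1a_3 + 3a_2^2$, the leading coefficient of $(H,S)$ in $p$ comes from the cross terms $h_0 s_1$ and $h_1 s_0$ with the appropriate integer weights supplied by the exponents. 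I would compute $h_1, s_1$ (the second coefficients of $H$ and $S$) explicitly in the $a_i$ and assemble the leader of $(H,S)$.

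The main obstacle I expect is the bookkeeping in that last step: getting the second coefficients $h_1$ and $s_1$ correct and combining them with the right exponent-weights so that the resulting cubic-in-$a_i$ expression collapses exactly to $2(N-2)(N-4)\,T_0$ after dividing by $N$. This is a finite polynomial-algebra check rather than anything conceptual, and the presence of the factor $N-4$ (which signals that everything is genuinely a feature of the $N>4$ regime) must emerge naturally from the exponent weights; verifying that the nine monomials of $T_0$ reproduce correctly is the place where a sign or binomial-coefficient slip is most likely. Once the leaders match, Roberts' theorem upgrades the equality of leaders to the full covariant identity, completing the proof.
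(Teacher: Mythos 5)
Your proposal is correct and takes essentially the same route as the paper: the paper's proof is precisely ``direct calculation of the Jacobian determinants,'' noting that by Roberts' theorem only the sources need be checked, which is exactly your reduction to leaders (with the weighted cross terms $(2N-4)h_0s_1 - (2N-8)h_1s_0$ giving the leader of $(H,S)$). The only slip is cosmetic: the leader comparison is quartic, not cubic, in the $a_i$, since terms such as $h_0 s_1$ have degree four in the coefficients.
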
 

\begin{proof} 
By direct calculation of the Jacobian determinants; once again, the work is simplified by the circumstance that only sources need be checked. 
\end{proof} 

\medbreak 

Any three (binary) quantics are related by an elementary syzygy that is now perhaps less well known than it should be. 

\medbreak 

\begin{theorem} \label{syz}
If $X, Y, Z$ are quantics of orders $\ell, m, n$ then 
$$\ell X (Y, Z) + m Y (Z, X) + n Z (X, Y) = 0.$$
\end{theorem}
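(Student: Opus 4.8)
The plan is to exploit Euler's identity for homogeneous functions together with the vanishing of a suitable $3 \times 3$ determinant. Recall that if $X$ is homogeneous of order $\ell$ then $p X_p + q X_q = \ell X$, and likewise $p Y_p + q Y_q = m Y$ and $p Z_p + q Z_q = n Z$. This lets me recognize the triple $(\ell X, m Y, n Z)$ as a fixed linear combination of two rows of partial derivatives, namely $p$ times one and $q$ times the other.

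Concretely, I would introduce the determinant
$$D = \begin{vmatrix} \ell X & m Y & n Z \\ X_p & Y_p & Z_p \\ X_q & Y_q & Z_q \end{vmatrix}.$$
By the three instances of Euler's identity, the first row equals $p$ times the second row plus $q$ times the third row; the rows are therefore linearly dependent and $D = 0$. This is where homogeneity does all the work: the vanishing of $D$ comes for free, with no computation.

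On the other hand, expanding $D$ along its first row produces
$$D = \ell X \begin{vmatrix} Y_p & Z_p \\ Y_q & Z_q \end{vmatrix} - m Y \begin{vmatrix} X_p & Z_p \\ X_q & Z_q \end{vmatrix} + n Z \begin{vmatrix} X_p & Y_p \\ X_q & Y_q \end{vmatrix}.$$
It then remains only to match each $2 \times 2$ minor with the appropriate Jacobian bracket. With the convention $(W, V) = W_p V_q - W_q V_p$ fixed at the outset, the first minor is $(Y, Z)$, the third is $(X, Y)$, and the middle minor is $(X, Z) = -(Z, X)$, whose sign combines with the cofactor sign to give the term $+\, m Y (Z, X)$. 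Setting $D = 0$ yields exactly $\ell X (Y, Z) + m Y (Z, X) + n Z (X, Y) = 0$.

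The only real obstacle is the bookkeeping in the Laplace expansion: one must track the alternating cofactor signs against the antisymmetry of the bracket, so that the middle term acquires its correct sign and the cyclic pattern emerges. Since no genuine calculation beyond this sign-matching is needed, I expect the whole argument to be short.
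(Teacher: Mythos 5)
Your proof is correct and rests on the same two ingredients as the paper's: Euler's identity for homogeneous functions and the vanishing of a $3\times 3$ determinant. The paper merely runs the computation in the opposite order — it expands the cyclic sum first and then recognizes the coefficients of $p$ and $q$ as determinants with repeated columns, whereas you write down the single determinant $D$, observe it vanishes by row dependence, and recover the cyclic sum by Laplace expansion — so the two arguments are the same in substance.
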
 

\begin{proof} 
By virtue of the Euler theorem on homogeneous functions, $p Z_p + q Z_q = n Z$ with similar equations for $X$ and $Y$. It follows that 
$$n Z (X, Y) = (p Z_p + q Z_q)(X_p Y_q - X_q Y_p)$$
with similar expressions for $\ell X (Y, Z)$ and $m Y (Z, X)$. Summing,  
$$\ell X (Y, Z) + m Y (Z, X) + n Z (X, Y) = \lambda p + \mu q$$
where 
\[\lambda = 
\begin{vmatrix}
 X_p & X_p & X_q \\
 Y_p & Y_p & Y_q \\
Z_p & Z_p & Z_q
\end{vmatrix} = 0
\]
and 
\[\mu = 
\begin{vmatrix}
 X_q & X_p & X_q \\
 Y_q & Y_p & Y_q \\
Z_q & Z_p & Z_q
\end{vmatrix} = 0.
\]
\end{proof} 

\medbreak 

We note here that Grace and Young [3] normalize the Jacobian as a transvectant: in Section 77, their Jacobian of $X$ and $Y$ is ours divided by $\ell m$; with their normalization, Theorem \ref{syz} simply asserts that 
$$X (Y, Z) + Y (Z, X) + Z (X, Y) = 0.$$

\medbreak 

A special consequence of this quite general result is the following syzygy between covariants of $U$. 

\medbreak 

\begin{theorem} \label{syz2}
$(N - 4) \: (U, H) \,S = (N - 2) \: [(U, S)\,  H - (U, T) \, U ].$
\end{theorem}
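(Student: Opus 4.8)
The plan is to feed the general three-quantic syzygy of Theorem \ref{syz} the triple $X = U$, $Y = H$, $Z = S$ and then convert the resulting $(H,S)$ term into $(U,T)$ by means of Theorem \ref{switch}. First I record the orders of the three quantics: $U$ has order $N$, the Hessian $H$ has order $2N - 4$, and the covariant $S$ has order $2N - 8$. Substituting these orders into Theorem \ref{syz} yields
$$N\, U\,(H,S) + (2N - 4)\, H\,(S,U) + (2N - 8)\, S\,(U,H) = 0.$$

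Next I would use the antisymmetry $(S,U) = -(U,S)$ of the Jacobian and factor the coefficients as $2N - 4 = 2(N - 2)$ and $2N - 8 = 2(N - 4)$, turning the identity into
$$N\, U\,(H,S) - 2(N - 2)\, H\,(U,S) + 2(N - 4)\, S\,(U,H) = 0.$$
The decisive step is to invoke Theorem \ref{switch} in the form $N(H,S) = 2(N - 2)(U,T)$, so that the first term becomes $N\,U\,(H,S) = 2(N - 2)\,U\,(U,T)$. Dividing the resulting equation by $2$ and rearranging then delivers
$$(N - 4)\,(U,H)\,S = (N - 2)\,[(U,S)\,H - (U,T)\,U],$$
which is exactly the claim.

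The computation is short, so the real content lies in the two choices that drive it: selecting the triple $(U, H, S)$ rather than any other, and recognizing that the stray term $(H,S)$ produced by the general syzygy is precisely what Theorem \ref{switch} rewrites in terms of $(U,T)$. I expect the only point requiring genuine care to be the bookkeeping of the Jacobian's sign convention together with the orders of the covariants, since it is these orders that produce the factors $(N - 4)$ and $(N - 2)$ in the final identity. As a safeguard against a slip, I would verify that every term in the concluding equation is a covariant of the common order $5N - 14$, which confirms that the coefficients have been tracked correctly.
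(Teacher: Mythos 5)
Your proposal is correct and is essentially the paper's own proof: the paper likewise applies Theorem \ref{syz} to the triple $\{U, H, S\}$ (with the roles cyclically permuted as $X = H$, $Y = S$, $Z = U$, which yields the identical equation since the syzygy is cyclically symmetric) and then uses Theorem \ref{switch} to trade $N(H,S)$ for $2(N-2)(U,T)$ before rearranging. Your order bookkeeping and the final check that all terms have common order $5N - 14$ are also accurate.
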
 

\begin{proof} 
Invoke Theorem \ref{syz} in case $X = H$ (of order $2 N - 4$), $Y = S$ (of order $2 N - 8$), $Z = U$ (of order $N$); apply Theorem \ref{switch} to replace $N (H, S)$ by $2 (N - 2) (U, T)$ and rearrange. 
\end{proof} 

\medbreak 

Note that if these covariants are evaluated along a Hamilton curve $\gamma$ then
$$(N - 4) \overset{\circ}{H} S = (N - 2) [\overset{\circ}{S} H - \overset{\circ}{T} U]$$
on account of the circumstance that differentiation along $\gamma$ is given by $\overset{\circ}{V} = (U, V)$. 

\medbreak 

We may now return to Theorem \ref{Phi} and the Weierstrass equation \ref{DE} for the rescaled Hessian $\Phi$: thus, 
$$\overset{\circ}{\Phi}\,^2 = 4 \Phi^3 - g_2 \Phi - g_3$$
where $g_2 = [N^2 (N - 2)^2 U]^2 S$ and  $g_3 = [N^2 (N - 2)^2 U]^3 T$. Recall that the differential equation is run along a Hamilton curve $\gamma$ and that we suppose the constant value of $U$ to be nonzero. This Weierstrass equation is proper precisely when $g_2$ and $g_3$ are constant: thus, precisely when $S$ and $T$ are constant; so, precisely when their derivatives $(U, S)$ and $(U, T)$ are constantly zero. Here, constancy is of course along the Hamilton curve $\gamma$. 

\medbreak 

At this point, the syzygy of Theorem \ref{syz2} exerts its influence. As $N > 4$, the simultaneous vanishing of $(U, S)$ and $(U, T)$ forces the product $(U, H) S$ to vanish along $\gamma$. If the constant value of $S$ is nonzero, then the derivative $(U, H)$ vanishes along $\gamma$ so that $\Phi$ is constant, its value $\Phi_0$ satisfying the cubic $4 \Phi_0^3 = g_2 \Phi_0 + g_3$. Consequently, if the (proper) Weierstrass equation considered here has a nonconstant solution then the constant value of $S$ (and hence of $g_2$)  is zero. In this connexion, it is interesting to note that $g_2$ is zero for cubic Hamiltonians but can be nonzero for quartic Hamiltonians; see [5] for this. 

\medbreak 

These findings may be summarized as follows. The Weierstrass equation of Theorem \ref{Phi} is proper along the Hamiltonian curve $\gamma$ precisely when the covariants $(U, S)$ and $(U, T)$ vanish along $\gamma$. In such a case, if the (rescaled) Hessian $\Phi$ is nonconstant along $\gamma$ then the constant value of $S$ (hence of $g_2$) is zero; moreover, a nonconstant $\Phi$ will be a Weierstrass $\wp$ function when the constant value of $T$ is nonzero. 

\medbreak 

As a special case, if we insist that the covariants $(U, S)$ and $(U, T)$ themselves vanish as quantics, then the Weierstrass equation of Theorem 2 is proper along each Hamilton curve; in this special case, only along those Hamilton curves that originate (and hence remain) in the zero-set of $S$ can $\Phi$ be a Weierstrass $\wp$ function. 

\medbreak 

In general, given the quantic $U$ as Hamiltonian, a Hamilton curve $\gamma$ is determined by its initial point $\gamma_0$; the question whether or not the covariants $(U, S)$ and $(U, T)$ are zero along $\gamma$ is therefore decided by $\gamma_0$. The task of elucidating this matter is reserved for a future publication. 

\medbreak 

The syzygy in Theorem \ref{syz2} is only one of many involving the covariants associated to a quantic; the influence of other syzygies is also reserved for a subsequent study. 

\medbreak 

We should comment on the origin of the Hessian $H$ within the Hamiltonian theory for the quantic $U$. Along a Hamilton curve, 
$$\overset{\circ}{q} = \frac{\partial U}{\partial p} \; , \; \; - \overset{\circ}{p} = \frac{\partial U}{\partial q}$$
whence a further differentiation produces 
$$\overset{\circ \circ}{p} = U_q U_{q p} - U_{q q} U_p \;, \; \; \overset{\circ \circ}{q} = U_p U_{p q} - U_{p p} U_q.$$
The Euler equation applied to the homogeneous functions $U_p$ and $U_q$ yields 
$$p U_{pp} + q U_{pq} = (N - 1)  U_p \; , \; \; p U_{qp} + q U_{qq}= (N - 1)  U_q.$$
Isolation of $p$ and $q$ separately results in 
$$(N - 1) [U_q U_{q p} - U_{q q} U_p] = - [U_{pp} U_{qq} - U_{pq} U_{qp}]\,p$$
and 
$$(N - 1) [U_p U_{p q} - U_{p p} U_q] = -[U_{pp} U_{qq} - U_{pq} U_{qp}]\, q.$$
Expressing our conclusion in terms of the normalized Hessian, 
$$\overset{\circ \circ}{p} = - [N^2 (N - 1) H] \, p$$
and 
$$\overset{\circ \circ}{q} = - [N^2 (N - 1) H] \, q.$$
Stripping the coordinates, 
$$\overset{\circ \circ}{\gamma} = - [N^2 (N - 1) H] \gamma.$$

\medbreak 

In terms of the rescaled Hessian $\Phi$ this differential equation assumes the form 
$$\overset{\circ \circ}{\gamma} = \Big[\frac{N - 1}{(N - 2)^2} \: \Phi \Big] \gamma.$$
In particular, when $\Phi$ is a Weierstrass $\wp$ function, this is a Lam\'e equation 
$$\overset{\circ \circ}{\gamma} = n (n + 1) \, \wp \gamma$$
with parameter $n = 1/(N - 2)$ that is not integral (unless $N = 3$). 

\medbreak 

We should perhaps also comment on the origin of this paper. It began as a continuation of [5] devoted solely to the case of homogeneous quintic polynomials, with only [6] as a reference. In this context, we found the syzygy of Theorem \ref{syz1} by hand, not by consulting [6]; indeed, this syzygy for quintics does not find its way explicitly into [6]. We then turned to the work of Cayley: this syzygy appears in his eighth memoir and is listed in Table 89 of his ninth memoir [1] as $A^3 D - A^2 B C + 4 C^3 + F^2 = 0$; in fact, it appeared previously on page 173 of [4] but with a sign change in the Hessian. The syzygy of Theorem \ref{syz2} for quintics already occurs in the second memoir of Cayley; it is listed in Table 89 of [1] as $A I + B F - C E = 0$. Our proof of this syzygy follows a suggestion in Section 194 of [6].

\bigbreak

\begin{center} 
{\small R}{\footnotesize EFERENCES}
\end{center} 
\medbreak

[1] A. Cayley, {\it A Ninth Memoir on Quantics}, Philosophical Transactions of the Royal Society of London, {\bf 161} 17-50 (1871).

\medbreak 

[2] E.B. Elliott, {\it An Introduction to the Algebra of Quantics}, Oxford University Press, Second Edition (1913). 

\medbreak 

[3] J.H. Grace and A. Young, {\it The Algebra of Invariants}, Cambridge University Press (1903). 

\medbreak 

[4] M. Roberts, {\it On the covariants of a binary quantic of the $n^{th}$ degree}, Quarterly Journal of Pure and Applied Mathematics {\bf IV} 168-178 (1861). 

\medbreak 

[5] P.L. Robinson, {\it Cayley-Sylvester invariants and the Hamilton equations}, arXiv 1505.02273 (2015). 

\medbreak 

[6] G. Salmon, {\it Lessons Introductory to the Modern Higher Algebra}, Hodges, Figgis and Company, Fourth Edition (1885). 

\medbreak

\end{document}